\newtheorem*{rep@theorem}{\rep@title}
\newcommand{\newreptheorem}[2]{%
\newenvironment{rep#1}[1]{%
 \def\rep@title{#2 \ref{##1}}%
 \begin{rep@theorem}}%
 {\end{rep@theorem}}}
\newtheorem{theorem}{Theorem}
\newtheorem{lemma}[theorem]{Lemma}
\newtheorem{sublemma}[theorem]{Sublemma}
\newtheorem{proposition}[theorem]{Proposition}
\theoremstyle{definition}
\DeclareMathOperator{\sq}{sq}
\DeclareMathOperator{\hex}{hex}
\DeclareMathOperator{\diam}{diam}
\begin{document}

\title[Square and hexagon configurations]{Discrete configuration spaces\\ of squares and hexagons}
\author{Hannah Alpert}
\address{231 West 18th Avenue, Columbus, OH 43210}
\email{alpert.19@osu.edu}
\begin{abstract}
We consider generalizations of the familiar fifteen-piece sliding puzzle on the $4$ by $4$ square grid.  On larger grids with more pieces and more holes, asymptotically how fast can we move the puzzle into the solved state?  We also give a variation with sliding hexagons.  The square puzzles and the hexagon puzzles are both discrete versions of configuration spaces of disks, which are of interest in statistical mechanics and topological robotics.  The combinatorial theorems and proofs in this paper suggest followup questions in both combinatorics and topology, and may turn out to be useful for proving topological statements about configuration spaces.
\end{abstract}

\maketitle

\section{Introduction}
The fifteen-piece sliding puzzle is familiar from childhood: a $4$ by $4$ square grid with fifteen distinct square pieces and one square hole.  By sliding the pieces we can achieve any even permutation of the fifteen pieces.   In this paper we consider puzzles on larger grids with more pieces and more holes.  Asymptotically, as the grids grow, how fast can we move the puzzle into the solved state?  Theorem~\ref{sq-diam} gives one answer to this question and is the main theorem of this paper.

This question about generalizations of the fifteen-piece puzzle is part of a larger body of work about configuration spaces of disks and spheres.  If we consider the space of configurations of $n$ distinct round particles of radius $r$ in a box of side length $1$, how do the properties of this space change as we fix $n$ and vary $r$, or as we increase $n$ and decrease $r$ as a function of $n$?  In physics the hope is that topological and geometric properties of the configuration space can be used to predict physical phenomena such as phase transition; the paper~\cite{Carlsson12} describes contributions to the literature, including \cite{Adler62, Teixeira04, Grinza04, Angelani05, Farber10, Franzosi07, Franzosi07b, Kastner07, Kastner08, Caiani97, Franzosi99, Casetti99, Casetti02, Franzosi00}.  A purely topological exploration of these configuration spaces can be found in papers such as \cite{Deeley11, Baryshnikov13, Alpert16}.  Similar questions are also part of topological robotics \cite{Abrams02, Farber08, Ghrist10, LaValle06}.  The sliding puzzles in the present paper can be seen as discrete approximations of the disk configuration spaces; presumably, many properties of the discrete and continuous versions ought to correspond.  Because of this context, the combinatorial results in this paper have both combinatorial and topological implications.  On the one hand, it would be interesting to generalize these results in a combinatorial direction; on the other, the results suggest analogous questions and proof methods applicable to the disk configuration spaces, and thus should be of interest to topologists.

One property of sliding disks that is not true of sliding squares is the ability to get stuck: there are arbitrarily sparse configurations of disks in which the disks cannot move (see \cite{Boroczky64} and \cite{Kahle12}).  In order to get closer to the behavior of disks, in this paper we also consider sliding hexagons.  However, although the hexagons are harder to move than squares, in some sense the difficulty occurs only up to a constant factor; this is the content of Theorem~\ref{hex-diam}, the hexagon analogue of Theorem~\ref{sq-diam} for squares.

In the remainder of this introductory section we define the discrete configuration spaces of squares.  Let $\widetilde{\sq}(n; m)$ denote the set of ways to arrange $n$ pieces numbered $1$ through $n$ in distinct positions on the $m$ by $m$ square grid (so we have $n \leq m^2$).  We refer to this as a \textbf{\textit{labeled configuration space}}, and let the \textbf{\textit{unlabeled configuration space}} $\sq(n; m)$ be the set of ways to arrange $n$ indistinguishable pieces in distinct positions on the $m$ by $m$ square grid.  Sometimes instead of an $m$ by $m$ square grid we want an $m_1$ by $m_2$ rectangular subset of the square grid, and use the notation $\widetilde{\sq}(n; m_1, m_2)$ and $\sq(n; m_1, m_2)$ for the corresponding configuration spaces.

These configuration spaces can be viewed as metric spaces.  First we give a naive choice of metric and show in Proposition~\ref{naive} that we can easily estimate, up to a constant factor, the diameter of the resulting metric space.  Then we give a more physically relevant metric that we use for the remainder of the paper.

Naively, we can say that two configurations are adjacent, or have distance $1$, if they differ by moving one piece into a hole (i.e., an unoccupied position) adjacent to it.  We refer to the metric generated by this relation as the \textbf{\textit{$L^1$ intrinsic metric}}.  It is always greater than or equal to the \textbf{\textit{$L^1$ extrinsic metric}}, under which the distance between two configurations is the sum over all pieces of the distance along the grid between the two positions of that piece.  (This definition is for the labeled configuration space; for the unlabeled space, take the minimum distance achieved over all labelings of the two configurations.)  That is, the $L^1$ extrinsic metric would be the minimum number of moves between the two configurations if multiple pieces were allowed to occupy the same position.

We would like to estimate the diameter of the labeled configuration space $\widetilde{\sq}(n; m)$.  As $m$ grows, does the $L^1$ intrinsic diameter grow a lot faster than the $L^1$ extrinsic diameter?  The following proposition shows that the answer is no.

\begin{proposition}\label{naive}
The labeled configuration space $\widetilde{\sq}(n; m)$, with $n \leq m^2 - 2$ so that the space is connected, has $L^1$ intrinsic diameter that grows as $\Theta(nm)$, matching the $L^1$ extrinsic diameter up to a constant factor.
\end{proposition}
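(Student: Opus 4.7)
My plan is to establish matching $\Theta(nm)$ bounds on the extrinsic and intrinsic diameters separately, exploiting the fact that the intrinsic metric pointwise dominates the extrinsic one.

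For the lower bound, I would first observe that the $L^1$ intrinsic metric dominates the $L^1$ extrinsic metric: each elementary move slides one piece by one grid step and hence changes the extrinsic distance by exactly $1$. So the intrinsic diameter is at least the extrinsic diameter, and it suffices to exhibit two configurations whose extrinsic distance is $\Omega(nm)$. I would fill the leftmost $\lceil n/m\rceil$ columns of the grid with the $n$ pieces under two ``opposite'' labelings---for instance, row-major order versus the same row-major order with rows and columns reversed---so that a constant fraction of the pieces must each travel $\Omega(m)$ grid steps, giving a sum of Manhattan distances of order $nm$. The assumption $n \leq m^2 - 2$ guarantees at least two holes and hence connectedness of the configuration space, so the two configurations are reachable from each other.

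For the extrinsic upper bound, each of the $n$ pieces contributes at most $2(m-1)$ to the sum of distances, yielding $O(nm)$ with no further work. The substantive step is the intrinsic upper bound, which I would establish by describing an explicit sorting algorithm. Given source and target configurations, the algorithm places pieces into their target positions one at a time, spending $O(m)$ moves per placement. The subroutine is: route an empty hole adjacent to the chosen piece in $O(m)$ hole-steps (the grid has $L^1$-diameter $2(m-1)$), then push the piece toward its destination one step at a time, each step costing $O(1)$ additional moves to re-orbit the hole around the piece and requiring at most $O(m)$ total pushes. Summing over all $n$ pieces gives $O(nm)$.

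The main obstacle is that each new placement must avoid disturbing pieces already placed, which could otherwise inflate the per-piece cost. I would handle this by splitting into two regimes. When $n$ is close to $m^2$ and only a bounded number of holes are available, I would appeal to the classical recursive $m$-puzzle strategy---place the top row left-to-right with a special two-piece maneuver at the right end, then the leftmost remaining column, and recurse on the $(m-1) \times (m-1)$ subgrid---which yields the well-known bound $O(m^3) = O(nm)$ in that regime. When $n \ll m^2$, there are always many holes available, so the basic subroutine above can route around any already-placed pieces with only $O(1)$ overhead per hole-step, keeping each placement at $O(m)$ and the total at $O(nm)$. Together, the two regimes cover every allowed $n$ and deliver the matching $O(nm)$ upper bound, completing the proposition.
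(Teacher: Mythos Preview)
Your lower-bound construction has a genuine gap when $n < m$. You keep the occupied positions fixed (the leftmost $\lceil n/m\rceil$ columns) and only change the labeling, so the extrinsic distance between your two configurations is bounded by $n$ times the $L^1$-diameter of the \emph{occupied region}. When $n < m$ that region is a partial single column of height $n$, and reversing the labels gives total displacement $\Theta(n^2)$, not $\Theta(nm)$. The fix is to let the positions differ as well: the paper simply takes any configuration and swaps its top and bottom halves (translating each half by $\lfloor m/2\rfloor$ rows), so every one of the $n$ pieces moves $\lfloor m/2\rfloor$ or $\lceil m/2\rceil$ steps regardless of how small $n$ is.

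Your upper-bound argument is more elaborate than needed, and the sparse regime is not fully justified. The claim that with many holes one can ``route around any already-placed pieces with only $O(1)$ overhead per hole-step'' depends on the already-placed pieces not disconnecting the free region; with $n$ up to $m^2/2$ and an arbitrary target configuration, the placed pieces can form barriers, and you have not specified a placement order that prevents this. In fact the regime split is unnecessary: your own dense-regime strategy (top row, then left column, then recurse on the $(m-1)\times(m-1)$ subgrid) is exactly what the paper does, and it already gives $O(nm)$ uniformly in $n$, since one simply stops after the $n$th placement. The point is that filling the outer row and column first guarantees the remaining subgrid stays connected, so each of the $n$ placements genuinely costs $O(m)$.
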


\begin{proof}
We check first that the $L^1$ extrinsic diameter grows as $\Omega(nm)$, and then that the $L^1$ intrinsic diameter grows as $O(nm)$.  For the extrinsic: take any configuration, and swap the top half and bottom half.  Each piece travels distance $\left\lfloor \frac{m}{2} \right \rfloor$ or $\left\lceil \frac{m}{2} \right\rceil$, so the $L^1$ extrinsic distance between the two configurations is at least $\left \lfloor \frac{m}{2} \right \rfloor \cdot n$.

To check that the $L^1$ intrinsic diameter grows as $O(nm)$, we designate one home configuration and provide an $O(nm)$--step algorithm for getting home.  The home configuration is as follows, shown in Figure~\ref{fig-sq-induction}: numbers $1$ through $m$ go in order along the top row, then $m+1$ through $2m-1$ down the left column, then the remainder of the second row, then the remainder of the second column, and so on until the pieces run out.  We use induction on $m$.  It takes $O(m)$ moves to move a hole to be adjacent to piece $1$ and then move piece $1$ home, then another $O(m)$ moves to bring piece $2$ home, and so on, to fill the top row and left column with the correct pieces.  Then we apply the inductive hypothesis to the remaining grid.
\end{proof}

\begin{figure}
\begin{center}
\SqInduction
\end{center}
\caption{The home configuration for $n = 14$ pieces on a board of side $m = 4$.  Because we can move the pieces home in numerical order in $O(nm)$ moves, the $L^1$ intrinsic diameter of the configuration space $\widetilde{\sq}(n; m)$ is $O(nm)$.}\label{fig-sq-induction}
\end{figure}

Having quickly resolved the question of $L^1$ intrinsic diameter, we do not pursue it any further.  In the remainder of the paper, we allow arbitrarily many independent moves to occur simultaneously.  This new construction is more reasonable from a physics perspective, if we imagine the pieces to be independent particles that might randomly wander into neighboring holes.  We say that the \textbf{\textit{$L^\infty$ intrinsic metric}} is generated by the relation that two configurations are adjacent, and have distance $1$, if they differ by moving any number of pieces into holes adjacent to them.  (Note that we do not permit sliding a piece into a position that is simultaneously being vacated; it must really be a hole.)  The \textbf{\textit{$L^\infty$ extrinsic metric}} is the maximum over all pieces of the distance along the grid between the positions of that piece in the two configurations.  Of course, if the number of holes is bounded, then the $L^\infty$ intrinsic metric and the $L^1$ intrinsic metric differ by at most a constant factor.  In the remainder of the paper, we fix the proportion of positions that contain pieces, so that as $m$ grows the number of holes grows.

In Section~\ref{sec-sq} we prove the main theorem for squares (Theorem~\ref{sq-diam}), which says that for a fixed ratio of pieces to holes, the $L^\infty$ intrinsic and extrinsic diameters match up to a constant factor.  Sections~\ref{sec-hex} and~\ref{sec-hole} address the analogous result, the main theorem for hexagons (Theorem~\ref{hex-diam}).  In Section~\ref{sec-hex} we define the discrete configuration spaces of hexagons and prove basic connectivity properties, and in Section~\ref{sec-hole} we prove the hole-moving lemma for hexagons (Lemma~\ref{hex-hole}), which serves to reduce the hexagon problem to the square problem, completing the proof of the main theorem for hexagons.

\emph{Acknowledgments.}  I would like to thank Matt Kahle for suggesting the problem of estimating the diameters of square sliding-piece puzzles, and Larry Guth and Jeremy Mason for additional helpful discussions.  I was supported by the Institute for Computational and Experimental Research in Mathematics (ICERM) for the full duration of this project.

\section{Main theorem for squares}\label{sec-sq}

In this section we prove the main theorem for squares (Theorem~\ref{sq-diam}), except for the proof of the hole-moving lemma for squares (Lemma~\ref{sq-hole}).  The proof of this lemma appears in Section~\ref{sec-hole}, even though it is not hard, because it serves as a template for the proof of the analogous lemma about hexagons, which is more involved.

\begin{theorem}[Main theorem for squares]\label{sq-diam}
Fix $\alpha \in (0, 1)$.  Then for $m$ a power of $2$, and $\frac{\alpha}{2}m^2 < n \leq \alpha m^2$, the labeled configuration space $\widetilde{\sq}(n; m)$ has $L^\infty$ intrinsic diameter that grows as $\Theta(m)$, matching the $L^\infty$ extrinsic diameter up to a constant factor depending only on $\alpha$.
\end{theorem}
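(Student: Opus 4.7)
The plan has two pieces. For the lower bound, I would repeat the swap argument from Proposition~\ref{naive}: starting from any configuration, swap the top and bottom halves. Every piece travels grid distance at least $\lfloor m/2 \rfloor$, so the $L^\infty$ extrinsic diameter is $\Omega(m)$, and the intrinsic diameter, which dominates the extrinsic, is also $\Omega(m)$.

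For the upper bound, the natural approach is divide-and-conquer on $m$; this is exactly why the statement restricts to $m$ a power of~$2$. Choose a canonical ``home'' configuration with the labels $\{1,\ldots,n\}$ distributed evenly among the four quadrants, and reduce the theorem to showing that any configuration can be brought home in $O(m)$ simultaneous rounds. Split the $m \times m$ board into four $(m/2) \times (m/2)$ quadrants and partition the labels into sets $S_1,\ldots,S_4$ according to which quadrant each one occupies at home. In $O(m)$ simultaneous moves I would rearrange the pieces so that every piece lies in its target quadrant, while each quadrant still satisfies the hypothesis of the theorem, i.e.\ its piece count lies in $(\tfrac{\alpha}{2}(m/2)^2,\,\alpha(m/2)^2]$. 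Then recurse on the four quadrants in parallel. This gives the recurrence $T(m) \le T(m/2) + C(\alpha)\,m$, which telescopes to $T(m) = O(m)$.

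The workhorse of the global step is the hole-moving lemma (Lemma~\ref{sq-hole}). Because the starting configuration generally has the wrong number of pieces in each quadrant, I would first apply the hole-moving lemma to shift a controlled number of holes between quadrants until each quadrant contains exactly $|S_i|$ pieces; then each ``wrong-quadrant'' piece is pushed across a quadrant interface along a corridor cleared by those holes. Both subroutines can be executed in $O(m)$ simultaneous rounds because all relevant paths have length $O(m)$ and the global hole fraction, bounded below by $1-\alpha > 0$, leaves room to route many paths in parallel without collisions.

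The hard part, and the main obstacle I anticipate, is density control through the recursion. The window $(\tfrac{\alpha}{2}m^2,\,\alpha m^2]$ in the hypothesis is chosen precisely so that after one halving step each sub-board again has piece count inside the admissible window; but one must also verify that the intermediate configurations produced during the global rearrangement never locally fill up in a way that blocks subsequent simultaneous moves. Coordinating the hole motion and the piece motion so that at every one of the $O(m)$ rounds enough holes sit next to the pieces that still need to move is exactly what the hole-moving lemma is designed to guarantee, and its careful statement (deferred to Section~\ref{sec-hole}) is what lets the divide-and-conquer recurrence close cleanly.
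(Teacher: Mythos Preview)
Your lower bound is fine and matches the paper.  The upper bound, however, has a real gap at the step you yourself flag as ``the workhorse'': getting every piece into its target quadrant in $O(m)$ rounds.

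You write that after equalizing hole counts with Lemma~\ref{sq-hole}, ``each wrong-quadrant piece is pushed across a quadrant interface along a corridor cleared by those holes,'' and that this works because paths have length $O(m)$ and the hole fraction is at least $1-\alpha$.  But there can be $\Theta(m^2)$ wrong-quadrant pieces, and they must all cross interfaces simultaneously without colliding.  A global density bound does not by itself produce a schedule; you need an actual routing algorithm.  Lemma~\ref{sq-hole} is about \emph{unlabeled} configurations and says nothing about where any particular labeled piece ends up, so it cannot supply that schedule.  In effect your ``global step'' is a four-bucket version of the full labeled sorting problem, and you have assumed its solution rather than proved it.

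The paper avoids this circularity by not recursing in $m$ at all.  It fixes a \emph{constant} block size $r$ (depending only on $\alpha$, chosen so that $\alpha < (r^2-1)/r^2$), uses Lemma~\ref{sq-hole} once to put one hole in each $r\times r$ block, and then applies the Thompson--Kung sorting network (Theorem~\ref{grid-swaps}) on the $\tfrac{m}{r}\times\tfrac{m}{r}$ grid of blocks, with the bucket lemma (Lemma~\ref{bucket}) guaranteeing that bucket-wise comparisons sort all the pieces.  Each comparison step is a full sort of an $r\times 2r$ board with two holes, costing time depending only on $r$; there are $O(m/r)$ such steps.  The sorting network is precisely the missing routing mechanism in your outline, and once you have it the divide-and-conquer recursion is unnecessary.

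A minor point: the window $(\tfrac{\alpha}{2}m^2,\alpha m^2]$ is not there to make a halving recursion close; it is just a way of saying the density is $\Theta(\alpha)$ so that the constant depends only on $\alpha$.  The paper handles the whole window by monotonicity in $n$.
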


The proof of the theorem is based on the solution to the following simpler problem.  Suppose that we have an $m$ by $m$ grid with pieces numbered $1$ through $m^2$ in distinct positions.  We can swap a pair of adjacent pieces, and in fact any disjoint set of such swaps can happen simultaneously.  How long does it take to sort the pieces?  Taking the $L^\infty$ extrinsic diameter gives a lower bound of $\Omega(m)$ time steps; can we sort in $\Theta(m)$ time steps?

An affirmative answer was found by Thompson and Kung in 1977, and appears below as Theorem~\ref{grid-swaps}.  Furthermore, the sequence of comparisons in their algorithm does not depend on the starting permutation; such an algorithm is called a \textbf{\textit{sorting network}}.  More specifically, by a \textbf{\textit{comparison step}} we mean an ordered pair of positions adjacent in the grid, indicating that the pieces in those positions should be swapped if necessary to match the order specified.  By a \textbf{\textit{routing step}} we mean an unordered pair of positions adjacent in the grid, indicating that the pieces in those positions should be swapped without comparing them.  The algorithm is a sequence of time steps, each consisting of a collection of disjoint comparison steps or routing steps.  The pieces end up in \textbf{\textit{snake-like row major order}}, which means that the numbering goes across the top row to the right, across the second row to the left, and so on, alternating directions.

\begin{theorem}[\cite{Thompson77}]\label{grid-swaps}
Consider the $m$ by $m$ grid graph, with pieces numbered $1$ through $m^2$ distributed one per vertex.  For $m$ a power of $2$, there is a sorting-network algorithm to sort the pieces into snake-like row major order, using $\Theta(m)$ time steps of disjoint comparisons or routings along grid edges.
\end{theorem}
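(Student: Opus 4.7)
The plan has two halves: an easy $\Omega(m)$ lower bound from the grid geometry and a recursive $O(m)$ upper bound. For the lower bound, the positions assigned to labels $1$ and $m^2$ in snake-like row major order lie at grid distance $\Theta(m)$ (for $m$ even, labels $1$ and $m^2$ occupy the corners $(1,1)$ and $(m,1)$), so starting from a permutation that swaps these two labels forces some piece to traverse $\Omega(m)$ edges. Since each parallel time step moves any given piece across at most one edge, this yields the $\Omega(m)$ bound.

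For the upper bound I would design an oblivious algorithm and invoke the zero-one principle for sorting networks, reducing correctness to the case of $\{0,1\}$-valued labelings and replacing general permutation bookkeeping by simply counting $1$s in each row, column, and region. The algorithm itself is divide-and-conquer: split the $m \times m$ grid into four $(m/2) \times (m/2)$ quadrants, run the algorithm recursively and in parallel on each quadrant with snake orientations chosen consistently with the global target, and then stitch the four sorted quadrants together with an $O(m)$-step merge phase consisting of local comparison and routing steps along grid edges. The resulting recurrence $T(m) \le T(m/2) + O(m)$ solves to $T(m) = O(m)$.

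The main obstacle is the merge step, which must be oblivious, must terminate in $O(m)$ parallel steps, and must produce the correct global snake order from four locally snake-sorted quadrants. A natural recipe is to alternate a handful of full-column odd-even transposition rounds (which rebalance $1$s between the top and bottom halves) with full-row rounds (which rebalance between the left and right halves), and finish with a constant number of additional local rounds to clean up the seams between adjacent quadrants. By the zero-one principle, verifying correctness reduces to a one-dimensional mass-transport argument showing that the count of $1$s in every row and column reaches its target value after $O(m)$ rounds, and that the resulting monotone rearrangement is indeed the intended snake order. This is precisely the mesh sorting network constructed by Thompson and Kung \cite{Thompson77}, and for the explicit constants and the detailed combinatorial verification I would appeal directly to their paper.
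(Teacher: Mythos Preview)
The paper does not prove this theorem at all: it is stated with the citation \cite{Thompson77} and then used as a black box in the proof of Theorem~\ref{sq-diam}. So there is no ``paper's own proof'' to compare against; your proposal already goes beyond what the paper provides.

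Your outline is broadly reasonable and in the spirit of Thompson and Kung's construction: the $\Omega(m)$ lower bound is immediate from the grid diameter, and the upper bound via recursive quadrant sorting plus an $O(m)$ merge, with correctness checked via the zero-one principle, is the right framework. The one soft spot is your description of the merge phase. ``Alternate a handful of full-column odd-even transposition rounds with full-row rounds, then clean up seams'' is suggestive but not obviously a correct oblivious merge of four snake-sorted quadrants in $O(m)$ steps; the actual Thompson--Kung merge is more carefully structured (it interleaves rows from the quadrants and then performs specific column and row merges in a prescribed order), and a naive alternation of full odd-even passes could easily degenerate to $\Theta(m)$ passes of $\Theta(m)$ steps each. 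Since you explicitly defer to \cite{Thompson77} for the details, this is fine as a proof sketch, but you should be aware that the merge is where the real content lies and your informal description does not by itself establish it.
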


Given a sorting network on $n$ numbered pieces, we can apply it instead to $n$ buckets each of $k$ numbered pieces, as follows.  At each comparison step, we take the $2k$ pieces in the two buckets and redistribute so that the least $k$ pieces are in one bucket and the greatest $k$ pieces are in the other.  Then the greater of the two buckets takes the role of the greater of the two pieces in the original sorting network, and the lesser bucket takes the role of the lesser piece.  I would not be surprised if the following lemma is well-known, but the proof is included for completeness.

\begin{lemma}[Bucket lemma]\label{bucket}
Given a sorting network on $n$ pieces, if we apply it instead to $n$ buckets of $k$ pieces, then it sorts the pieces: the least $k$ pieces end up in the least bucket, the next $k$ pieces in the next bucket, and so on.
\end{lemma}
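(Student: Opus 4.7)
The plan is to apply the zero--one principle to the bucketed network. First I would verify that each bucket operation commutes with any threshold map $\phi_t : x \mapsto \mathbf{1}[x > t]$: whether we threshold before or after rearranging the $2k$ pieces of $A \cup B$ so that the $k$ smallest sit in $A$, the resulting bucket contents agree, because the $k$ smallest of the thresholded values are precisely the thresholds of the $k$ smallest values. By the usual argument, if the bucketed network failed to sort some input, one could threshold at a value separating a misplaced pair to get a $0/1$ input on which the network also fails; so it suffices to treat $0/1$-valued pieces.

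For a $0/1$ input, let $c_i \in \{0, 1, \dots, k\}$ be the number of $0$'s in bucket $i$ and let $T = \sum_i c_i$. Each bucket operation on an ordered pair $(A, B)$ transforms $(c_A, c_B)$ into $(\min(k, c_A + c_B), \max(0, c_A + c_B - k))$, and the desired output is the polarized vector $(k, \dots, k, r, 0, \dots, 0)$ with $\lfloor T/k \rfloor$ leading $k$'s and $r = T \bmod k$. I would first handle the bucket-level subclass where every $c_i \in \{0, k\}$: this property is preserved by each bucket operation, and on the rescaled vector $u_i = c_i / k \in \{0, 1\}$ the bucket operation reduces to $(u_A, u_B) \mapsto (u_A \vee u_B,\, u_A \wedge u_B)$, which is exactly the single-piece comparator run in the reversed direction. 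Reversing every comparator direction of a sorting network still yields a sorting network (sorting in the opposite order), so $u$ ends up as $(1, \dots, 1, 0, \dots, 0)$, which translates back to the polarized configuration for $c$.

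The hard part is extending this conclusion from the bucket-level subclass to arbitrary $0/1$ inputs, where partial buckets $0 < c_i < k$ can appear and persist in intermediate stages. My plan is to leverage three features: the bucket operation depends only on the sum $c_A + c_B$, not on how it is split; the polarized vector is itself a fixed point of every bucket operation in the network; and the map induced by the full bucketed network is componentwise monotone in $c$. Combined with the reversed-network argument applied to appropriate threshold layers, these should pin the output on any $0/1$ input with total $T$ to the polarized vector. The most delicate technical step is ruling out a weaker ``decreasing but unpolarized'' outcome; I would handle this by showing that whenever two strictly partial buckets coexist, the sorting-network structure guarantees a later bucket operation that merges them into at most one partial bucket, and iterating this observation until the polarized form is reached.
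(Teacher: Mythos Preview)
Your reduction to $0/1$ inputs via the zero--one principle is correct, and your treatment of the subclass $c_i\in\{0,k\}$ is fine. The gap is entirely in the last paragraph, where you try to pass from that subclass to arbitrary $0/1$ inputs.

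The three tools you list do not do the work you need. Componentwise monotonicity of the bucketed network only sandwiches the output of a general $c$ between two polarized vectors with the \emph{wrong} totals, which does not force the output to be polarized. Thresholding the bucket counts $c_i$ at a level $t\in\{1,\dots,k\}$ does not commute with the bucket operation (for instance $c_A=c_B=t/2$ gives $c_A'=\min(k,t)\ge t$ while both indicators $\mathbf{1}[c_A\ge t]$ and $\mathbf{1}[c_B\ge t]$ vanish), so the ``reversed-network argument applied to threshold layers'' cannot be run as stated. Most importantly, your key claim---that whenever two strictly partial buckets coexist, some later bucket operation will directly merge them---is exactly the crux of the lemma and you give no argument for it. A sorting network need not compare two specific positions, so this requires proof; and iterating the observation forward in time does not by itself give an induction, since after one merge the remaining partial buckets are in a new intermediate state, not a fresh starting configuration.

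The paper fills this gap by a different device: induction on the number of mixed buckets in the \emph{starting} configuration. If there is at most one mixed bucket, every bucket comparison acts as an ordinary comparator on the values $0$, $k$, and the single intermediate value, so the network sorts them. If there are at least two, one argues that some comparison must involve two mixed buckets (otherwise every mixed value is only ever compared with $0$'s and $k$'s, its trajectory is independent of its actual value, and perturbing two such values produces an input the network fails to sort). Taking the \emph{first} such comparison and replacing the two incoming mixed contents by their post-comparison contents yields---by tracing those contents back along paths that met only pure buckets---a new starting configuration with strictly fewer mixed buckets and the same final output. That backward-tracing step is the idea your proposal is missing.
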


\begin{proof}
We use the 0-1 principle for sorting networks, explained on p.~224 of \cite{Knuth73}, which says that it suffices to show that if all pieces have labels 0 and 1 instead of distinct labels, then the algorithm sorts them so that all 0 pieces precede all 1 pieces.  We use induction on the number of buckets in the sorting configuration that contain both 0 and 1 (mixed buckets).  The base case is if there is at most one mixed bucket; in this case the sorting proceeds as if each bucket were a 0 piece, a 1 piece, or in the case of the mixed bucket, a piece labeled $\frac{1}{2}$.  These buckets are sorted correctly by the sorting network.

Suppose that there are at least two mixed buckets in the starting configuration.  There must be, at some time during the sorting, a comparison step involving two mixed buckets; we select a first such comparison step.  After this step, at most one of the two buckets is a mixed bucket, and the sorting proceeds the same as if these two buckets had come into this comparison step in the same state that they leave it.  That is, we can follow these two buckets backward in time to construct a different starting configuration with one mixed bucket fewer than our original starting configuration, but with the same ending configuration.  By the inductive hypothesis, this ending configuration must be the sorted configuration.
\end{proof}

The remaining ingredient needed to prove the main theorem for squares (Theorem~\ref{sq-diam}) is the following lemma, for which the proof appears in Section~\ref{sec-hole} along with the corresponding proof for hexagons.

\begin{lemma}[Hole-moving lemma for squares]\label{sq-hole}
For $m$ a power of $2$, and arbitrary $n \leq m^2$, the unlabeled configuration space $\sq(n; m)$ has $L^\infty$ intrinsic diameter that grows as $O(m)$ independent of $n$.
\end{lemma}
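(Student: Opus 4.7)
The plan is to pick a canonical \emph{home} configuration depending only on $n$ and to show that any configuration in $\sq(n;m)$ can be moved to home in $O(m)$ time steps in the $L^\infty$ intrinsic metric; since that metric is symmetric (every move is reversible), this bounds the diameter by $O(m)$.

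I would take home to be the configuration in which the $n$ pieces occupy the last $n$ positions in snake-like row major order, leaving the first $m^2-n$ positions as holes. The main idea is to apply Thompson and Kung's sorting network from Theorem~\ref{grid-swaps} directly to the $0$--$1$ labeling of the grid in which each piece is labeled $1$ and each hole is labeled $0$. By the $0$--$1$ principle for sorting networks (the same principle invoked in the proof of Lemma~\ref{bucket}), applying the network to this $0$--$1$ input sorts all the $0$'s before all the $1$'s in snake-like row major order, producing precisely the home configuration. The network uses $\Theta(m)$ time steps, independent of $n$.

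The step that must be verified is that each time step of the network can be realized as a single move in the $L^\infty$ intrinsic metric on $\sq(n;m)$. Each time step is a disjoint collection of pairs of adjacent grid positions, each pair designated as either a comparison or a routing step. Under the $0$--$1$ labeling, a comparison step produces a nontrivial swap exactly when the pair consists of a $0$ and a $1$ in the wrong order, and a routing step produces a nontrivial swap exactly when the pair consists of a $0$ and a $1$. In either case the nontrivial swap slides a piece into an adjacent hole. Because the pairs within one time step are disjoint, no position is simultaneously vacated and occupied, so the resulting collection of piece-into-hole slides is a valid $L^\infty$ intrinsic move.

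The main thing to be careful about is exactly this correspondence between sorting-network time steps and $L^\infty$ intrinsic moves; in particular, one should confirm that the disjointness of the pairs implies no piece is sliding into a position being simultaneously vacated, and that trivial swaps (two equal labels) can be omitted without changing anything. Once the correspondence is set up, the bound on the diameter is immediate: any configuration reaches home in $O(m)$ moves, so any two configurations are connected by a sequence of at most $O(m)+O(m)=O(m)$ moves.
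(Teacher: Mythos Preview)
Your argument is correct. Applying the Thompson--Kung network to the $0$--$1$ labeling and checking that each time step becomes a legal $L^\infty$ move is a clean way to prove the square hole-moving lemma. One small remark: you do not actually need the $0$--$1$ principle here, since that principle is the converse of what you use. What you need is only that a network sorting all distinct-label inputs also sorts $0$--$1$ inputs, which is immediate (replace the $0$'s by $1,\ldots,k$ and the $1$'s by $k+1,\ldots,m^2$ and project back).

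Your route is genuinely different from the paper's, and in fact simpler. The paper builds a recursive quadrant argument with a hole-pushing sublemma and a hole-turning sublemma, and it says explicitly that simpler proofs exist for squares but that this particular proof is chosen because it can be modified for hexagons. The point is that your argument does not transfer to the hexagon case: there a piece can move only when it is adjacent to a pair of adjacent holes, so a sorting-network swap of a single $0$ with a single $1$ is not, in general, a legal hexagon move. The paper's pushing/turning machinery is designed so that holes travel in groups, which is exactly what the hexagon rules require. So your proof buys brevity for squares, while the paper's buys a template that survives the passage to hexagons.
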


Using this lemma we can finish the proof of the main theorem for squares.

\begin{proof}[Proof of main theorem for squares (Theorem~\ref{sq-diam})]
We need to provide an algorithm for sorting $\widetilde{\sq}(n; m)$ to some home configuration in $O(m)$ time steps.  Roughly, for some choice of $r$ we view the $m$ by $m$ grid as an $\frac{m}{r}$ by $\frac{m}{r}$ grid of $r$ by $r$ blocks, and each block plays the role of a bucket in the sorting network on the $\frac{m}{r}$ by $\frac{m}{r}$ grid.

More specifically, we fix $r$ a power of $2$ large enough that $\alpha < \frac{r^2 - 1}{r^2}$.  This means that if we view the $m$ by $m$ grid as an $\frac{m}{r}$ by $\frac{m}{r}$ grid of $r$ by $r$ blocks, then we have enough holes to put at least one in each block.  Note that the diameter is monotonic in $n$, since we can always delete a piece from any sequence of moves.  So it suffices to provide a sorting algorithm for the case where $n = \left(\frac{m}{r}\right)^2(r^2 - 1)$ and we have the same number of holes as blocks.

The sorting algorithm goes as follows.  First we use the hole-moving lemma (Lemma~\ref{sq-hole}) to move the holes so that each block has one hole.  Then we apply Thompson and Kung's sorting algorithm (Theorem~\ref{grid-swaps}), using the blocks as the buckets from Lemma~\ref{bucket}.  That is, each pair of blocks forms an $r$ by $2r$ or $2r$ by $r$ grid with two holes, so it can be sorted so that the lesser half of the pieces end up in the first block in order, and the greater half end up in the second block in order; the time required is a function only of $r$ (and hence of $\alpha$ but not $m$).  All together, the sorting takes $O\left(\frac{m}{r}\right) = O(m)$ time steps.

At the end of this process, the blocks are in snake-like row major order and the pieces in each block are sorted---say, also in snake-like row major order with the hole at the end.  Thus, it takes $O(m)$ time steps to get from an arbitrary configuration to this home configuration.
\end{proof}

\section{Sorting hexagons with six holes}\label{sec-hex}

In the hexagonal sliding puzzle, instead of numbered square tiles we use numbered hexagon tiles.  Geometrically if a hole is surrounded by pieces, then none of the pieces can move into the hole.  But if two holes are adjacent, then a piece adjacent to both holes can slide into either hole, as shown in Figure~\ref{fig-hex-between}.  We can ask the same asymptotic questions about the diameter of this hexagon puzzle as for the square puzzle.  For our growing boards it would be most natural to use a hexagon shape, but we use a parallelogram shape so that the techniques from the square case carry over better.  By an \textbf{\textit{$m_1$ by $m_2$ parallelogram}} we mean $m_1$ rows each of $m_2$ hexagons, each row half a step to the right of the previous row.

\begin{figure}
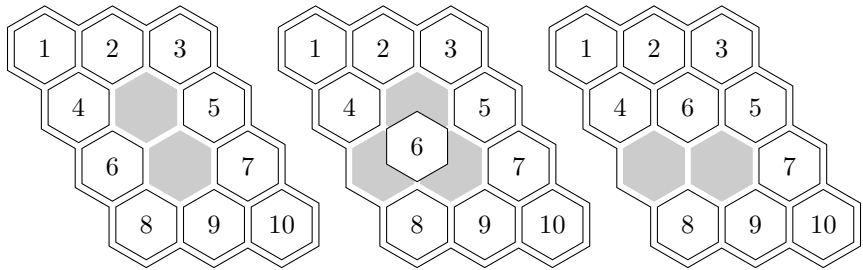

\begin{center}
\HexBetween
\end{center}
\caption{In the configuration space $\widetilde{\hex}(10; 4, 3)$ a piece adjacent to two adjacent holes can move into either of those positions.  The intermediate configuration, with the piece balanced between three positions, is not part of the configuration space.}\label{fig-hex-between}
\end{figure}

We let $\widetilde{\hex}(n; m_1, m_2)$ and $\hex(n; m_1, m_2)$ denote the labeled and unlabeled configuration spaces of $n$ hexagon pieces on an $m_1$ by $m_2$ parallelogram board, and let $\widetilde{\hex}(n; m)$ and $\hex(n; m)$ denote those on an $m$ by $m$ parallelogram board.  We allow a piece to move into either of two adjacent holes, and require that if multiple pieces move simultaneously, the pairs of holes that they use must be disjoint.  If we can make a set of simultaneous moves between two configurations in this way, we say that the configurations are adjacent and have $L^\infty$ intrinsic distance $1$; the \textbf{\textit{$L^\infty$ intrinsic metric}} is generated by these pairs.  The \textbf{\textit{$L^\infty$ extrinsic metric}} is, as with squares, the maximum over all pieces of the number of steps needed to move the piece on an empty board between its two positions.

We would like to prove a theorem for hexagons analogous to the main theorem for squares (Theorem~\ref{sq-diam}).  However, unlike the square configuration spaces with at least two holes, the space $\widetilde{\hex}(n; m)$ is not always connected.  Therefore it does not make sense a priori to ask about its diameter.  For instance, any configuration where all holes are isolated is not adjacent to any other configuration.  However, the next proposition states that as long as there are at least six holes, this is the only obstruction to connectivity, so each hexagon configuration space with at least six holes consists of a large connected component and possibly some isolated configurations.  Just as sorting a square grid with two holes is an important step in the main theorem for squares (Theorem~\ref{sq-diam}), this proposition is an important step in the main theorem for hexagons (Theorem~\ref{hex-diam}), which concerns the diameter of the large connected component of $\widetilde{\hex}(n; m)$.

\begin{proposition}\label{hex-connect}
For $m \geq 5$ and $n \leq m^2 - 6$, in the labeled configuration space $\widetilde{\hex}(n; m)$ there is a large connected component containing every configuration for which not all the holes are isolated.
\end{proposition}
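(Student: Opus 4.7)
The plan is to show that every configuration in $\widetilde{\hex}(n;m)$ possessing a pair of adjacent holes can be carried, by a sequence of moves, to a single canonical reference configuration $\sigma_0$. First I would note that the property of having at least one adjacent pair of holes is preserved under every legal move: when a piece at a common neighbor $C$ of adjacent holes $A,B$ slides into $A$, the resulting pair of holes $\{B,C\}$ is again adjacent, because $C$ was by definition a neighbor of $B$. Thus the non-isolated configurations form a subgraph that is closed under moves, and it suffices to exhibit a move-sequence to $\sigma_0$ from each such configuration.

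The core technical step is a \emph{dimer walk}, where by a dimer I mean an adjacent pair of holes. The rotation move just described shifts the dimer by one edge around its shared vertex; chaining such rotations walks the dimer along any path through the parallelogram, with $m\ge 5$ providing enough room to route around corners where a common neighbor might otherwise be missing. Using the dimer, I would then consolidate all holes into a single cluster: walk the dimer up to another hole until they become mutually adjacent, giving a triple of mutually adjacent holes, and then repeat. Because $n\le m^2-6$ there are at least six holes, so this process terminates with all holes packed into, say, a $2\times 3$ block near a corner of the board. The dimer walk permutes the pieces it passes through in some complicated way, but we do not need to control that permutation at this stage.

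Finally, with six clustered holes available as a workshop, I would permute the remaining pieces into the canonical order. A cluster of six holes provides flexible enough neighborhoods to realize transpositions of adjacent pieces by a commutator gadget (using one hole to hold a piece while rearranging with the others), and since such transpositions generate all permutations of pieces, we can reach the reference labeling of $\sigma_0$. The main obstacle will be the dimer-walking step: one must verify that at every stage of the walk a rotation is actually available, meaning that some common neighbor of the current dimer still contains a piece rather than a hole, and that the target adjacent-pair position can be reached without leaving the parallelogram. This requires a case analysis near edges and corners of the board, and it is exactly here that the hypothesis $m\ge 5$ gets used.
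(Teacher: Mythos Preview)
Your outline is essentially the paper's proof: walk a pair of adjacent holes around the board to collect the remaining holes into a $3\times 2$ block in a corner, then use that block as a workspace to generate transpositions of adjacent pieces and hence all permutations.

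One correction worth flagging: the hypothesis $m\ge 5$ is not what makes the dimer walk go through---the walking and collecting work on smaller boards as well, and if a common neighbor of your dimer happens to be a hole you simply have a larger connected hole-cluster to maneuver with, not an obstruction. In the paper the bound $m\ge 5$ enters only in the permutation phase: to transpose two specified adjacent pieces, one first slides the $3\times 2$ hole-block horizontally into a pair of adjacent columns that avoid both pieces, and then vertically until the two pieces lie in the three rows occupied by the block, reducing to a $3\times m$ sort. The existence of two adjacent columns missing both target pieces is exactly what requires at least five columns. So when you flesh out your ``commutator gadget,'' expect the board-size hypothesis to appear there rather than in the hole-gathering step.
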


\begin{proof}
It suffices to prove the statement when there are exactly six holes.  We start with an arbitrary configuration in which some two holes are adjacent, and show that we can move the holes into a $3$ by $2$ parallelogram in the upper-left corner, and sort the pieces into an arbitrary order.

First we move the holes.  From the point of view of the holes, a hole is permitted to move one step whenever there is another hole to which it remains adjacent.  So, we can take the two adjacent holes in our starting configuration and walk them over to a third hole, then move all three---staying connected---into the upper left corner.  Then, depositing one hole in the corner, we can walk the remaining two over to retrieve the next hole, and so on.  In this way we move the holes to form a $3$ by $2$ parallelogram in the upper-left corner.

Next we want to show that we can use this block of holes to permute the pieces arbitrarily.  We start by showing this for a $3$ by $4$ board: the holes are in the first two columns, and pieces labeled $1$ through $6$ are in the remaining two columns, say in column-major order.  We claim that we can transpose pieces $1$ and $2$, pieces $2$ and $3$, pieces $3$ and $4$, pieces $4$ and $5$, or pieces $5$ and $6$.  By composing these transpositions we can achieve any permutation.  The method for making the transpositions is as follows, shown in Figure~\ref{fig-hex-swap}.  It is not hard to see how to transpose pieces $1$ and $2$, or $2$ and $3$, while keeping pieces $4$, $5$, and $6$ in place.  Similarly, to transpose pieces $4$ and $5$, or $5$ and $6$, we start by shifting all pieces left by two columns.  It remains to see how to transpose pieces $3$ and $4$.  There is enough room to do this if we start with pieces $1$, $2$, and $3$ shifted to the leftmost column and pieces $4$, $5$, and $6$ in the rightmost column.

\begin{figure}
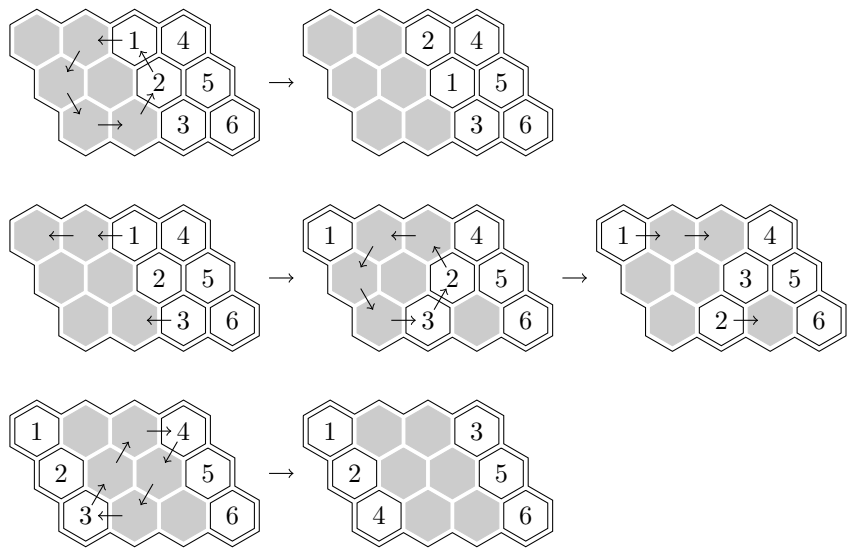

\begin{center}
\HexSwap
\end{center}
\caption{In this $3$ by $4$ parallelogram with $6$ pieces, we can swap any two consecutively numbered pieces; swaps $(12)$, $(23)$, and $(34)$ are shown.}\label{fig-hex-swap}
\end{figure}

This same method shows that we can sort a $3$ by $m$ parallelogram for arbitrary $m$, with the first two columns all holes.  For a parallelogram with more rows, we show that we can transpose any two adjacent pieces; these transpositions are more than enough to generate all permutations.  Given such a pair of adjacent pieces, if they are both in the first three rows, then we already know what to do.  Otherwise, find two adjacent columns that do not contain either of the pieces---this is why we have assumed $m \geq 5$---and shift the block of holes to the right until it is in that pair of columns.  Then shift the block of holes down until the pair of pieces we want to transpose is in the three rows that have holes in them.  We use the $3$ by $m$ case to achieve the desired transposition, then undo the hole-moving to return the other pieces to where they were.  By composing such transpositions, we can sort the pieces into any order.

Note that the proposition is still true for the cases $m = 3$ and $m = 4$, which can be done by hand, but the proof is not included here.
\end{proof}

Having established the existence of a large connected component, we are ready to state the main theorem for hexagons.

\begin{theorem}[Main theorem for hexagons]\label{hex-diam}
Fix $\alpha \in (0, 1)$.  Then for $m$ a power of $2$, and $\frac{\alpha}{2}m^2 < n \leq \alpha m^2$, the large connected component of the labeled configuration space $\widetilde{\hex}(n; m)$ has $L^\infty$ intrinsic diameter that grows as $\Theta(m)$, matching the $L^\infty$ extrinsic diameter up to a constant factor depending only on $\alpha$.
\end{theorem}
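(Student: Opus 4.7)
The proof parallels the proof of Theorem \ref{sq-diam}, adapted to the hexagonal grid. The $\Omega(m)$ lower bound is immediate from the $L^\infty$ extrinsic diameter: swapping the top and bottom halves of the parallelogram forces some piece to travel $\Omega(m)$ steps. For the $O(m)$ upper bound, the plan is to partition the $m$ by $m$ parallelogram into $r$ by $r$ parallelogram blocks for a suitable constant $r$, apply the hole-moving lemma for hexagons (Lemma~\ref{hex-hole}) to arrange the holes into a regular pattern, and then run Thompson and Kung's sorting network (Theorem~\ref{grid-swaps}) at the meta level via the bucket lemma (Lemma~\ref{bucket}).

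More concretely, I would fix $r$ a power of $2$ with $r \geq 5$ and $\alpha < \frac{r^2 - 6}{r^2}$, so that a uniform distribution of holes gives at least $6$ holes per block. By monotonicity of the diameter in $n$, it suffices to consider $n = (m/r)^2(r^2 - 6)$, giving exactly $6$ holes per block. First, use Lemma~\ref{hex-hole} to move the holes so that each block's $6$ holes form a $3$ by $2$ sub-parallelogram in a designated corner; this takes $O(m)$ time steps. Next, run Thompson and Kung's algorithm on the meta $(m/r)$ by $(m/r)$ grid of blocks, using only horizontal and vertical block-adjacencies. Each comparison step between two adjacent blocks must sort an $r$ by $2r$ (or $2r$ by $r$) parallelogram of hexagons containing $12$ holes, not all isolated; Proposition~\ref{hex-connect} (in its evident rectangular analogue) then guarantees that these two blocks can be sorted into any desired arrangement --- in particular, the lesser $k$ pieces in the first block and the greater $k$ in the second, each in snake-like order with the $6$ holes restored to the designated corner. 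Since $r$ depends only on $\alpha$, each comparison step takes $O(1)$ time, and the entire meta-sorting takes $O(m/r) \cdot O(1) = O(m)$ time steps.

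Two small points merit care. The initial configuration lies in the large connected component by hypothesis, so it has at least two adjacent holes, which Lemma~\ref{hex-hole} can exploit to begin rearranging holes. Also, the $3$ by $2$ clustered hole pattern is an invariant of the whole sorting process: after each comparison step we restore each block to having its $6$ holes clustered, so the hypothesis of Proposition~\ref{hex-connect} continues to hold at every stage. I expect the main obstacle in executing this plan to be not the present theorem --- whose proof, given the ingredients, is a short adaptation of the square case --- but rather the hole-moving lemma itself (Lemma~\ref{hex-hole}), whose hexagonal version is substantially more delicate than the square one because of the potential for locally stuck configurations; that proof is the content of Section~\ref{sec-hole}.
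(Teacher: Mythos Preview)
Your proposal is correct and follows essentially the same approach as the paper: partition into $r \times r$ blocks, apply the hole-moving lemma to distribute holes evenly, then run Thompson--Kung with the bucket lemma, using Proposition~\ref{hex-connect} to implement each comparison. The only inessential difference is that the paper budgets three holes per block (so that each comparison pair has exactly the six holes Proposition~\ref{hex-connect} requires) rather than your six; your version is slightly less parsimonious but equally valid, and you are more explicit than the paper about needing the rectangular form of Proposition~\ref{hex-connect} and about maintaining the clustered-hole invariant between comparison steps.
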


The proof is completely analogous to the proof of the main theorem for squares (Theorem~\ref{sq-diam}).  However, the hole-moving step, in which the holes are moved from their arbitrary starting positions to be evenly distributed over the board, is trickier for hexagons.  The hole-moving lemma (Lemma~\ref{hex-hole}) is the main goal of Section~\ref{sec-hole}, which then ends with the completed proof of the main theorem for hexagons.

\section{Hole-moving lemmas}\label{sec-hole}

In this section we prove the hole-moving lemma for squares (Lemma~\ref{sq-hole}), the hole-moving lemma for hexagons (Lemma~\ref{hex-hole}), and the main theorem for hexagons (Theorem~\ref{hex-diam}).  First we prove the hole-moving lemma for squares as a template for the hole-moving lemma for hexagons.  (There are simpler proofs for the square version, but we give a proof here that can be modified for hexagons.)  Then we prove the hexagon version and finish the main theorem for hexagons.

Here we repeat the statement of the hole-moving lemma for squares.

\begin{reptheorem}{sq-hole}[Hole-moving lemma for squares]
For $m$ a power of $2$, and arbitrary $n \leq m^2$, the unlabeled configuration space $\sq(n; m)$ has $L^\infty$ intrinsic diameter that grows as $O(m)$ independent of $n$.
\end{reptheorem}

The proof relies on the hole-pushing sublemma (Sublemma~\ref{sq-push}) and the hole-turning sublemma (Sublemma~\ref{sq-turn}).

\begin{sublemma}[Hole-pushing sublemma for squares]\label{sq-push}
For arbitrary $n \leq m$, in the one-row unlabeled configuration space $\sq(n; 1, m)$ the $L^\infty$ intrinsic distance between the configuration with all holes on the left and the configuration with all holes on the right is bounded above by $m$.
\end{sublemma}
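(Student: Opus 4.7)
The plan is to exhibit an explicit fully parallel algorithm and analyze it by tracking individual pieces. Label the $n$ pieces $1, 2, \ldots, n$ in left-to-right order, so that piece $i$ begins at position $m - n + i$ and must end at position $i$; each piece must shift left by exactly $m - n$ units.

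The algorithm I would use is: at each time step, simultaneously perform every swap between a hole at some position $j$ and a piece at position $j + 1$. First I would check that this collection of swaps is always legal. Two distinct such $HP$ patterns cannot overlap in position, since a shared position would have to be both a hole and a piece. Moreover, no destination is being simultaneously vacated, because each destination is a square that was already a hole at the start of the step.

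Then I would prove by induction on $i$ that, under this algorithm, piece $i$ moves left at exactly the time steps $i, i+1, \ldots, i + (m - n) - 1$ and is stationary otherwise. The base case $i = 1$ is immediate: every square to the left of piece $1$'s initial position is a hole, so piece $1$ advances one square per step for $m - n$ consecutive steps, landing at position $1$. For the inductive step, piece $i$ is initially blocked by piece $i-1$ sitting immediately to its left, so it cannot move until that square becomes a hole; by the inductive hypothesis piece $i-1$ first vacates that square at step $i-1$, so piece $i$ first moves at step $i$. From then on pieces $i-1$ and $i$ advance in lockstep, and piece $i$ keeps moving until it has moved $m - n$ times in total. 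Since the last piece to finish is piece $n$, the algorithm terminates after step $n + (m - n) - 1 = m - 1 \leq m$, which is the desired bound.

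The step I expect to need the most care is the lockstep part of the inductive step. Whenever piece $i-1$ vacates a square at some step $t$, the simultaneous-move restriction forbids piece $i$ from entering that same square at step $t$; the argument must confirm that the newly created hole persists into step $t+1$ and that piece $i$ has not moved elsewhere, so that the $HP$ pattern is still present at the start of step $t+1$ and our algorithm triggers another swap. Once that is verified, the pipeline runs cleanly and the $m - 1$ step bound follows.
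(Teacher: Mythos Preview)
Your proposal is correct and is essentially the same argument as the paper's. The paper simply prescribes the schedule directly---``move piece $k$ to the left at time steps $k$ through $k + m - n - 1$''---whereas you phrase the same algorithm as a greedy local rule and then verify by induction that it produces exactly that schedule; the extra care you take with legality and the lockstep argument is sound but not needed at the level of detail the paper works at.
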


\begin{proof}
Start with all holes on the left, and number the pieces $1$ through $n$ from left to right.  We can move piece $k$ to the left at time steps $k$ through $k + m - n - 1$ to get to the configuration with all holes on the right.
\end{proof}

\begin{sublemma}[Hole-turning sublemma for squares]\label{sq-turn}
For $m_1$ and $m_2$ powers of $2$, and arbitrary $n \leq m_1m_2$, in the unlabeled configuration space $\sq(m_1m_2 - n; m_1, m_2)$ the configuration with holes in the first $n$ positions in row-major order and the configuration with holes in the first $n$ positions in column-major order have $L^\infty$ intrinsic distance that grows as $O(m_1 + m_2)$ independent of $n$.
\end{sublemma}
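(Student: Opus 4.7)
The plan is to proceed by induction on $m_1 + m_2$, halving the larger dimension at each step; the base case $m_1 = m_2 = 1$ is trivial since the two orderings coincide on a single cell. Without loss of generality assume $m_1 \geq m_2$ (with $m_1 \geq 2$), and split the grid horizontally into a top half $T$ and a bottom half $B$, each an $\frac{m_1}{2}$ by $m_2$ rectangle. The key observation is that row-major order on the full $m_1$ by $m_2$ grid, when restricted to either half, coincides with row-major order on that half, and likewise column-major on the full grid restricts to column-major on each half. Hence, letting $n_T$ denote the number of the first $n$ column-major positions of the full grid that lie in $T$, and $n_B := n - n_T$, the target configuration is exactly the column-major arrangement of $n_T$ holes on $T$ together with the column-major arrangement of $n_B$ holes on $B$, and the source is the row-major arrangement of $n_T^{\mathrm{src}} := \min(n, m_1 m_2 / 2)$ holes on $T$ together with the row-major arrangement of the remaining holes on $B$.

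I would then introduce an intermediate configuration whose top half is the row-major arrangement of $n_T$ holes and whose bottom half is the row-major arrangement of $n_B$ holes. In Step~1, I transform the source into this intermediate; in Step~2, I apply the inductive hypothesis separately to each half to turn its row-major pattern into its column-major pattern. Since $T$ and $B$ are disjoint subgrids, the two Step~2 operations truly run in parallel and together cost $O(\frac{m_1}{2} + m_2)$. For Step~1, the observation is that rearranging holes within a single column preserves per-column totals while changing per-row totals, and vice versa for rows: so I would first apply the hole-pushing sublemma (Sublemma~\ref{sq-push}) within each column in parallel to reach the intermediate's per-row hole totals (a feasible assignment exists because both configurations have $n$ holes and can be produced by a straightforward transportation argument), then apply Sublemma~\ref{sq-push} within each row in parallel to reach the intermediate's per-column totals, for a total of $O(m_1) + O(m_2) = O(m_1 + m_2)$ time.

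Combining the two steps gives the recurrence $T(m_1, m_2) = O(m_1 + m_2) + T(\frac{m_1}{2}, m_2)$, which unrolls---always halving whichever dimension is currently larger---into a geometric series summing to $O(m_1 + m_2)$. The principal obstacle is making Step~1 precise: one must exhibit an explicit transportation from the source per-column configuration to the intermediate per-row distribution that is realizable by simultaneous disjoint applications of Sublemma~\ref{sq-push}, and show that the subsequent within-row redistribution does not disturb the per-row totals already achieved. The row-major structure of both source and intermediate restricts their symmetric difference to a staircase-like band of cells near the horizontal midline, which is the structural feature that keeps each phase within the hole-pushing budget of $O(m_1)$ and $O(m_2)$ respectively.
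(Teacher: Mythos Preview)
Your overall inductive scheme is reasonable, but Step~1 is where the proof breaks down, and the paper avoids exactly this difficulty by taking a different decomposition.

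First, the structural claim you rely on is false. Take $m_1 = m_2 = 8$ and $n = 56$. The source (row-major) has holes filling rows $1$--$7$. Here $n_T = n_B = 28$, so your intermediate has holes in rows $1$--$3$, the left half of row $4$, rows $5$--$7$, and the left half of row $8$. The symmetric difference therefore contains cells in row $8$, which is as far from the horizontal midline as possible; it is not confined to a band near the midline. So the heuristic you offer in the last paragraph does not hold.

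Second, your two-phase plan (column pushes to match the intermediate's per-row totals, then row pushes to match its per-column totals) tacitly assumes two things you do not prove. You need the existence of a $0$--$1$ matrix whose column sums equal those of the source and whose row sums equal those of the intermediate; ``both configurations have $n$ holes'' is necessary but not sufficient, and a Gale--Ryser-type verification is required. You also need that within a single row or column one can pass between \emph{arbitrary} unlabeled configurations in $O(\text{length})$ steps; Sublemma~\ref{sq-push} as stated only moves between the all-left and all-right configurations, so you are invoking a strictly stronger (if still easy) statement without justifying it.

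By contrast, the paper halves \emph{both} dimensions at once, splitting into four $\tfrac{m_1}{2}\times\tfrac{m_2}{2}$ quadrants and treating the cases $n \le \tfrac12 m_1 m_2$ and $n > \tfrac12 m_1 m_2$ separately. In each case the redistribution step is a single contiguous block of holes pushed wholesale into an adjacent quadrant, so Sublemma~\ref{sq-push} applies verbatim with no feasibility question to settle, and the recurrence $T(m_1,m_2)\le (m_1+m_2)+T(\tfrac{m_1}{2},\tfrac{m_2}{2})$ closes immediately with constant $2$. Your one-dimension-halving recurrence also sums to $O(m_1+m_2)$, but only after a side computation bounding $m_2\log_2(m_1/m_2)$ by $m_1$; the quadrant split makes this unnecessary.
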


\begin{proof}
We start in column-major order and give an algorithm for changing to row-major order.  We use induction on $m_1 + m_2$.  We use different strategies for when the holes make up at most half the grid and for when they make up more than half the grid.  Of course we could just swap the roles of pieces and holes, but this trick will not work in the hole-turning sublemma for hexagons (Sublemma~\ref{hex-turn}).

We divide the $m_1$ by $m_2$ board into $\frac{m_1}{2}$ by $\frac{m_2}{2}$ quadrants.  In the case where there are $n \leq \frac{1}{2}m_1m_2$ holes, we use at most $m_2$ time steps to push the lower-left-quadrant holes into the lower-right quadrant, $\frac{m_2}{2}$ steps to the right.  Then we use the inductive hypothesis to change each quadrant into row-major order.  Then we use at most $m_1$ time steps to push the lower-right-quadrant holes into the upper-right quadrant, $\frac{m_1}{2}$ steps up.  If it takes at most $C\cdot \left(\frac{m_1}{2} + \frac{m_2}{2}\right)$ time steps to turn each quadrant, then the number of time steps needed for the whole process is at most 
\[m_2 + C \cdot \left( \frac{m_1}{2} + \frac{m_2}{2} \right) + m_1 = \left( 1+ \frac{1}{2}C\right) \cdot (m_1 + m_2),\]
which is at most $C \cdot (m_1 + m_2)$ as long as we choose $C \geq 2$.

The case where there are $n > \frac{1}{2}m_1m_2$ holes is slightly more complicated, and is depicted in Figure~\ref{fig-sq-turn}.  First we swap the configurations in the upper two quadrants, in the following way.  Suppose the starting configuration has $k$ holes in the upper-right quadrant, in column-major order, and $\frac{1}{4}m_1m_2$ holes filling the upper-left quadrant.  We fix the first $k$ holes (in column-major order) in the upper-left quadrant; there are $\frac{1}{4}m_1m_2$ other holes in the upper half of the board, and we push these to the right until they fill the upper-right quadrant.  This takes at most $m_2$ time steps.  Then we use the inductive hypothesis to change each quadrant to row-major order.  Then we push all holes upward as far as possible, using at most $m_1$ time steps.  The result is row-major order, and again the total time is at most $\left(1 + \frac{1}{2}C\right) \cdot (m_1 + m_2) \leq C\cdot (m_1 + m_2)$ steps.
\end{proof}

\begin{figure}
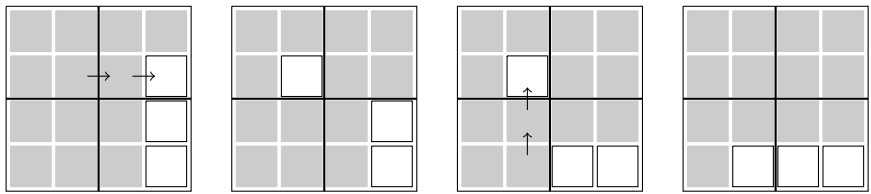

\begin{center}
\SqTurn
\end{center}
\caption{The hole-turning method is to start in column-major order, push some holes right, turn each quadrant, and push all holes up into row-major order.}\label{fig-sq-turn}
\end{figure}

\begin{proof}[Proof of hole-moving lemma for squares (Lemma~\ref{sq-hole})]
We start with an arbitrary configuration of holes, and give an algorithm for changing to the configuration where all the holes precede all the pieces in row-major order.  The process is depicted in Figure~\ref{fig-sq-move}.  We use induction on $m$.  Using the inductive hypothesis (with columns and rows swapped), we put each $\frac{m}{2}$ by $\frac{m}{2}$ quadrant into column-major order.  In column-major order there may be several full columns of holes followed by at most one partial column, which we call the short column.  We combine the two upper quadrants and combine the two lower quadrants as follows: push down the holes in the short column of the right quadrant, then push all holes left, then push up the holes in the short column.  Then we apply the hole-turning sublemma (Sublemma~\ref{sq-turn}) to the upper and lower half-boards to convert them into row-major order.  Then we combine: push right the holes in the short row of the lower half, then push all holes up, then push left the holes in the short row.  If it takes time at most $C \cdot \frac{m}{2}$ to put each quadrant into column-major order, and time at most $C' \cdot m$ for the other steps, then the total time is at most $C \cdot m$ as long as we choose $C \geq 2C'$.
\end{proof}

\begin{figure}
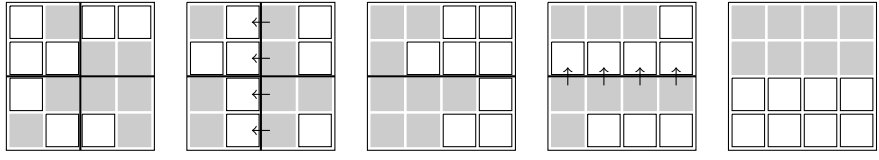

\begin{center}
\SqMove
\end{center}
\caption{The hole-moving method is to put each quadrant into column-major order, merge top quadrants and merge bottom quadrants, turn top and bottom half-boards, and merge top and bottom into row-major order.}\label{fig-sq-move}
\end{figure}

Next we prove the hole-moving lemma for hexagons.

\begin{lemma}[Hole-moving lemma for hexagons]\label{hex-hole}
For $m$ a power of $2$, and arbitrary $n \leq m^2$, the large connected component of the unlabeled configuration space $\hex(n; m)$ has $L^\infty$ intrinsic diameter that grows as $O(m)$ independent of $n$.
\end{lemma}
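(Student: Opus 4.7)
The plan is to mirror the proof of Lemma~\ref{sq-hole} exactly, with each primitive operation replaced by a hexagonal analogue.  I would introduce a hexagon hole-pushing sublemma---moving all the holes of a thin parallelogram from one end to the other in time linear in the length---and a hexagon hole-turning sublemma---swapping row-major and column-major hole orders on an $m_1$ by $m_2$ parallelogram in $O(m_1+m_2)$ time steps.  Granting these, the main argument is structurally identical to the square version: recursively place each of the four $\frac{m}{2}$ by $\frac{m}{2}$ quadrants into column-major order, merge the two top quadrants and the two bottom quadrants using the hole-pushing sublemma on the short hole-column of each right quadrant, apply the hexagon hole-turning sublemma to each half-board, and then merge the two halves vertically.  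The same constant-absorption time accounting as in the square proof gives a total of $O(m)$ time steps.

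The hole-turning sublemma for hexagons should go through by rerunning the quadrant-splitting induction of Sublemma~\ref{sq-turn}, again in two cases depending on whether holes or pieces are in the majority.  The author has already flagged that the pieces/holes duality trick is unavailable in the hexagonal setting, so both cases must be handled directly; this is routine once the hole-pushing primitive is in place.  Throughout the argument, I would verify that each intermediate configuration remains in the large connected component: by Proposition~\ref{hex-connect} it suffices to keep at least one pair of adjacent holes in each region under manipulation, so the sublemmas should be stated under a non-isolation hypothesis on their inputs and shown to produce outputs satisfying the same hypothesis.

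The main obstacle is the hole-pushing step itself.  In the hexagon puzzle, an isolated hole cannot move on its own: a piece relocates only when sandwiched between two adjacent holes, so each elementary move effectively rolls a connected pair of holes around a triangle of hexagons.  The hole-pushing sublemma therefore cannot shuttle holes individually across a row; it must maintain a traveling cluster of at least two adjacent holes that acts as a motor, ratcheting subsequent holes forward and absorbing or depositing them as it advances.  I anticipate formulating the sublemma for parallelograms of width two or three, where the hexagonal geometry provides enough adjacent triangles for such a cluster to propagate and reverse direction, and then designing a concrete protocol---park the mobile pair at the leading edge, use it to reel in the next hole, march the enlarged cluster onward---that also documents the preservation of the non-isolation condition between time steps.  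This is where essentially all of the novel work relative to the square proof will lie.
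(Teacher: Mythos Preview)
Your outline matches the paper's proof in its global structure---divide into quadrants, recurse, merge horizontally, turn each half-board, merge vertically---and your hole-turning sublemma is essentially the paper's Sublemma~\ref{hex-turn}.  You also correctly identify that hexagon hole-pushing must move holes in pairs (the paper partitions the board into $2 \times m_2$ ribbons and walks column-pairs of holes leftward, with a zigzag trick when only a partial column is present, which is the concrete version of your ``traveling cluster'').

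The one genuine gap is how the non-isolation hypothesis is \emph{established} at the start of each recursive call.  You say you will state the sublemmas under a non-isolation hypothesis and check that outputs satisfy it, but the problem arises earlier: when you cut the board into four $\frac{m}{2} \times \frac{m}{2}$ quadrants, there is no reason a given quadrant inherits a pair of adjacent holes from the parent configuration---a quadrant could contain only isolated holes, and then nothing in that quadrant can move at all, so you cannot apply the inductive hypothesis to it.  The paper fixes this with an explicit \emph{seeding} step before recursing: starting from the single guaranteed adjacent pair, walk it to a third hole, then to a fourth, and so on until four adjacent pairs have been formed; then deposit one pair in each quadrant.  This costs $O(m)$ and makes each quadrant a legitimate instance of the smaller problem.  (The paper also dispatches the case of fewer than eight holes directly.)  Without this step, your recursion does not get off the ground; with it, the rest of your plan goes through exactly as you describe.

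A secondary remark: the paper's hole-pushing sublemma is stated not as ``push all holes from one end of a thin strip to the other'' but as ``merge the column-major hole blocks of two half-boards into a single column-major block,'' which is precisely the primitive needed in the main recursion and in the turning sublemma.  This formulation lets all rows act in parallel (via the $2$-row ribbons), whereas a single traveling-cluster motor servicing the whole board would not obviously give the $O(m)$ bound.
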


The proof again relies on a hole-pushing sublemma (Sublemma~\ref{hex-push}) and a hole-turning sublemma (Sublemma~\ref{hex-turn}).  For hexagons the hole-pushing sublemma is more involved than what is needed for squares.  It says that we can push the holes in two half-boards together.

\begin{sublemma}[Hole-pushing sublemma for hexagons]\label{hex-push}
For $m_1$ and $m_2$ divisible by $2$, and $n$ satisfying $3 \leq n \leq m_1m_2$, in the unlabeled configuration space $\hex(m_1m_2 - n; m_1, m_2)$ we consider the configurations in which the holes in the left $m_1$ by $\frac{m_2}{2}$ half-board occupy the first positions in column-major order and the remaining holes occupy the first positions in column-major order of the right half-board.  The $L^\infty$ intrinsic distance from any of these configurations to the one where the holes occupy the first $n$ positions in column-major order has an $O(m_1 + m_2)$ upper bound independent of $n$.
\end{sublemma}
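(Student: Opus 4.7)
The plan is to merge the two half-board hole configurations by pipelined parallel sweeps, each costing $O(m_1 + m_2)$ time steps. The idea is that while individual holes may have to travel distance $\Theta(m_2)$, many of them can travel simultaneously under the $L^\infty$ intrinsic metric.

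First I would dispose of the degenerate case: if the left half-board is already entirely filled with holes, then the right half's holes already occupy their correct column-major positions on the whole board, and no moves are needed. Otherwise there are one or more hole-free columns in the left half that have to be populated by migrating holes from the right half, and the left half ends in a partially-filled top column that must be completed.

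Next I would build a basic primitive that translates a connected vertical strip of holes one column to the left in $O(1)$ time steps, using the hexagon two-hole rule in a rolling fashion: each advancing hole swaps with a piece whose second required adjacent hole is its within-strip neighbor. Applying this primitive in parallel to many columns simultaneously, the right half's hole block sweeps leftward across the seam, is absorbed into the left half's staircase, and the combined configuration settles in $O(m_2)$ time steps. A subsequent cleanup phase, analogous to the hole-turning sublemma for squares (Sublemma~\ref{sq-turn}) but adapted to hexagons, reconciles any leftover misalignment along the seam and at the partial top column in another $O(m_1 + m_2)$ time steps.

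The main obstacle is maintaining the two-hole adjacency requirement at the leading edge of the sweep, since an advancing hole risks being stranded in isolation; this is exactly the difficulty that made the hexagon version \emph{more involved} than the square version. This is where the hypothesis $n \geq 3$ becomes essential: with at least three holes always available we can reserve a traveling companion to accompany each advancing front. In the boundary cases where one half has only one or two holes (not enough to form an internally connected strip), I would first spend $O(m_1 + m_2)$ preliminary steps to ferry a three-hole cluster from the fuller half up to the seam, establishing a connected migrating front; from there the main sweep proceeds as above. Summing the three phases yields the required $O(m_1 + m_2)$ bound, independent of $n$.
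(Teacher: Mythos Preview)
Your outline is in the right spirit, and the basic parallel-shift primitive (moving a connected vertical strip of holes one column to the left in $O(1)$ steps by pairing adjacent holes) is sound.  But two real gaps remain, and the paper's proof handles both by different, more concrete means.

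First, your ``cleanup phase, analogous to the hole-turning sublemma for squares but adapted to hexagons'' is circular in this paper's logical structure: the hexagon hole-turning sublemma (Sublemma~\ref{hex-turn}) is \emph{derived from} the present hole-pushing sublemma, not the other way around.  So you cannot invoke hole-turning here; you must do the merge by hand.

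Second, you have not said what actually happens at the seam and at the partial (``short'') column.  Shifting the right block leftward as whole columns does not by itself produce column-major order on the full board: the right half's short column has to be spliced onto the left half's short column, and that splice is where the hexagon constraint bites.  The paper handles this directly with a three-phase maneuver --- push the right half's short column \emph{down} (by a zigzag using the adjacent empty column when necessary), then push everything left, then push the resulting short column back \emph{up} --- so that the leftward push acts on configurations with no ragged top edge.  For the leftward push itself the paper does not use your whole-column shift; it partitions the board into $2 \times m_2$ ribbons and walks each column-pair of holes leftward as if it were a single square hole (a leftover single hole in the last column is grouped into a triple).  Your boundary-case idea of ferrying a small cluster over when one half has only one hole matches the paper's treatment.

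In short: drop the appeal to hole-turning, and replace the vague ``absorbed into the staircase / cleanup'' language with the explicit down--left--up sequence on the short column.  Once you do that, your column-shift primitive and the paper's ribbon primitive are interchangeable for the middle phase.
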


\begin{proof}
We modify the strategy used for squares: push down the short column of holes in the right half-board, push everything left, then push up the short column.  First we consider the case where the right half-board has at least one full column of holes.  In this case pushing down the short column proceeds just as in the hole-pushing sublemma for squares (Sublemma~\ref{sq-push}).  To push left, we divide the board into $2$ by $m_2$ ribbons.  Within each ribbon, we pair up the holes by column and walk each pair to the left along the ribbon, with each pair playing the role of a square hole in the hole-pushing sublemma for squares; if there is only one hole in the right-most column, we group it with the previous column and move that group of $3$ to the left together, somewhat slower than the groups of $2$ but still in $O(m_2)$ steps.  After pushing left, we push up the short column again as in the hole-pushing sublemma for squares.

Next we consider the case where the right half-board has less than a full column of holes.  If there is just one hole in the right half, we send a pair of holes from the left half to walk over and retrieve it.  Otherwise, to push that column down, we need to use the column to the right of it as well, with a zigzag motion as in Figure~\ref{fig-hex-zigzag}.  We group the holes into adjacent pairs and triples.  We walk each group one position to the right, so that the entire column of holes is shifted right, and then walk each grouping one position down and to the left; this process pushes the column of holes down one space from its original position.  Using this strategy, we push the column of holes all the way down.  In the same groupings, we push all the way to the left.  Then we push the short column up, either directly or in a zigzag depending on whether there are holes to the left of it or not.
\end{proof}

\begin{figure}
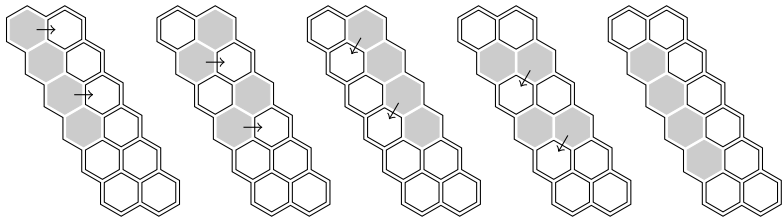

\begin{center}
\HexZigzag
\end{center}
\caption{Zigzagging between two columns, an arbitrarily long column of holes can advance up to $m_1$ positions downward in $O(m_1)$ time steps.}\label{fig-hex-zigzag}
\end{figure}

\begin{sublemma}[Hole-turning sublemma for hexagons]\label{hex-turn}
For $m_1$ and $m_2$ powers of $2$, and arbitrary $n \leq m_1m_2$, in the unlabeled configuration space $\hex(m_1m_2 - n; m_1, m_2)$ the configuration with holes in the first $n$ positions in row-major order and the configuration with holes in the first $n$ positions in column-major order have $L^\infty$ intrinsic distance that grows as $O(m_1 + m_2)$ independent of $n$.
\end{sublemma}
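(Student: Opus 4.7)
The plan is to follow the outline of the hole-turning sublemma for squares (Sublemma~\ref{sq-turn}): induct on $m_1 + m_2$, cut the $m_1$ by $m_2$ parallelogram into four $\frac{m_1}{2}$ by $\frac{m_2}{2}$ quadrants, start in column-major order, and aim for row-major order. I would split into two cases according to whether $n \leq \frac{1}{2}m_1 m_2$ or $n > \frac{1}{2}m_1 m_2$, since the square proof already warns that in the hexagon setting one cannot simply swap the roles of pieces and holes (an isolated piece surrounded by pieces is stuck, just as an isolated hole surrounded by holes is stuck).

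In the few-holes case, the column-major configuration has holes filling some prefix of the left columns, so all holes sit in the left pair of quadrants. First I would use the hole-pushing sublemma for hexagons (Sublemma~\ref{hex-push}) across the lower half-board to move the lower-left holes rightward so that each lower quadrant ends up with its holes in column-major order. Then I would apply the inductive hypothesis to each of the four quadrants to convert each from column-major to row-major order. Finally a vertical use of Sublemma~\ref{hex-push} (rotated, or reproved by the same zigzag argument) on the right half-board would push the lower-right holes up into the upper-right quadrant to produce global row-major order. The many-holes case runs analogously on the upper half-board: fix whichever holes must stay in the upper-left, shuffle the remaining upper holes to place the upper-right quadrant in column-major order, turn each quadrant inductively to row-major, then push everything upward. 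Each stage costs $O(m_1 + m_2)$, so the recursion $T(m_1+m_2) \leq C'(m_1+m_2) + T\left(\tfrac{m_1+m_2}{2}\right)$ closes to $O(m_1+m_2)$ once the multiplicative constant is chosen large enough, exactly as in Sublemma~\ref{sq-turn}.

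The main obstacle I anticipate is managing the hypotheses of Sublemma~\ref{hex-push} and the large-connected-component condition at every level of the recursion: Sublemma~\ref{hex-push} needs $n \geq 3$, and the intermediate subgrids must avoid the pathological isolated-holes configurations excluded from the large connected component. When a quadrant or half-board has only one or two holes, or is almost entirely holes with only a tiny pocket of pieces, I would handle the situation by hand---temporarily borrowing a constant number of holes from a neighboring region, running the operation, and then restoring them---so that the $O(m_1+m_2)$ bound is preserved at each recursion level. The base cases where $m_1 = 2$ or $m_2 = 2$ can be verified directly, since there the diameter of the large connected component is bounded.
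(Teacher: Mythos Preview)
Your proposal is correct and follows essentially the same approach as the paper: the paper's proof simply says to rerun the square argument of Sublemma~\ref{sq-turn}, replacing each horizontal push by Sublemma~\ref{hex-push} applied in reverse (to separate holes between the two halves) and each vertical push by Sublemma~\ref{hex-push} applied forward with rows and columns interchanged. Your attention to the $n\geq 3$ hypothesis and to borrowing a few holes for degenerate quadrants is more careful than the paper, which glosses over these edge cases; note, though, that your parenthetical about ``an isolated piece surrounded by pieces'' and ``an isolated hole surrounded by holes'' is garbled---the actual asymmetry is that an isolated \emph{hole} surrounded by \emph{pieces} cannot move in the hexagon model, so one cannot simply swap the roles of pieces and holes.
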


\begin{proof}
We modify the proof of the hole-turning sublemma for squares (Sublemma~\ref{sq-turn}).  There, the method consists of three steps: horizontal pushing, turning each quadrant, and vertical pushing.  For hexagons, for the horizontal pushing step we apply the hole-pushing sublemma for hexagons (Sublemma~\ref{hex-push}), but in reverse so that the holes become separated rather than combined; for the vertical pushing step we apply hole-pushing in the forward direction, but with rows and columns swapped.  Otherwise the proof is the same as in the square case.
\end{proof}

\begin{proof}[Proof of hole-moving lemma for hexagons (Lemma~\ref{hex-hole})]
We modify the proof of the hole-moving lemma for squares (Lemma~\ref{sq-hole}).  There, the method consists of four steps: moving each quadrant to column-major order, horizontal pushing, turning upper and lower half-boards, and vertical pushing.  For hexagons we need to add a preliminary step of getting one pair of adjacent holes into each quadrant.  To do this, we walk our original pair over to a third hole, and carry it over to a fourth hole, and so on, until we have four pairs of adjacent holes.  (If the total number of holes is less than $8$, we can carry them all directly to row-major order and be done.)  Then we move one pair of holes to each quadrant; this whole process can be done in $O(m)$ time steps.  For the remaining steps of the proof for squares, we apply the hexagonal analogues: the inductive hypothesis, hole-pushing (Sublemma~\ref{hex-push}), hole-turning (Sublemma~\ref{hex-turn}), and hole-pushing again.  The result is an algorithm to move the holes to precede all the pieces in row-major order, using $O(m)$ time steps.
\end{proof}

Having proved the hole-moving lemma, we are ready to prove the main theorem for hexagons.

\begin{proof}[Proof of main theorem for hexagons (Theorem~\ref{hex-diam})]
The proof is very similar to the proof of Theorem~\ref{sq-diam}.  We fix $r$ a power of $2$ large enough that $\alpha < \frac{r^2 - 3}{r^2}$, and view the $m$ by $m$ board as an $\frac{m}{r}$ by $\frac{m}{r}$ grid of $r$ by $r$ blocks.  We may assume that the number of holes is exactly three times the number of blocks.

First we use the hole-moving lemma (Lemma~\ref{hex-hole}) to move the holes so that each block has three holes, adjacent to each other.  Then we apply Thompson and Kung's sorting algorithm (Theorem~\ref{grid-swaps}), using Proposition~\ref{hex-connect} to sort the pair of adjacent blocks in each comparison step.  This process allows us to sort the pieces in $O(m)$ time steps so that the blocks are in snake-like row major order and the pieces in each block are in snake-like row major order with the three holes at the end.
\end{proof}

\section{Conclusion}

When exploring configuration spaces such as these, one goal is to find evidence of a phase transition: does the diameter of $\widetilde{\sq}(n; m)$ or $\widetilde{\hex}(n; m)$ grow differently when the density $\frac{n}{m^2}$ is low, compared to when the density is high?  The main results of this paper state that for each density $\alpha \in (0, 1)$ the quantities $\frac{1}{m}\diam(\widetilde{\sq}(\alpha m^2; m))$ and $\frac{1}{m} \diam(\widetilde{\hex}(\alpha m^2; m))$ are bounded as $m$ goes to infinity.  If each one converges to a limit, then that limit is a function of $\alpha$, and for any value of $\alpha$ at which this function or its derivatives are discontinuous, we can say that the diameter has a phase transition at that density $\alpha$.  One could speculate on which densities are good candidates for a phase transition, such as the density at which there exist configurations where every piece is adjacent to a hole.  It would also be useful to estimate the limits of $\frac{1}{m}\diam(\widetilde{\sq}(\alpha m^2; m))$ and $\frac{1}{m} \diam(\widetilde{\hex}(\alpha m^2; m))$, if they exist, in terms of $\alpha$.  Put differently, can we estimate the diameter of $\widetilde{\sq}(n; m)$ or $\widetilde{\hex}(n; m)$ as a function of both $n$ and $m$, up to a constant factor?

The purpose of studying the square and hexagon configuration spaces was to gather clues about disk configuration spaces.  Do the main theorems of this paper have an analogue that describes the diameters of disk configuration spaces or their connected components?  And, for any further results on whether the square and hexagon problems show a phase transition in diameter as we vary density, do those statements apply to disks as well?

This work suggests many other questions.  For instance, does the main theorem for squares (Theorem~\ref{sq-diam}) also apply to square grids in higher dimensions?  More generally, to what extent does Thompson and Kung's result (Theorem~\ref{grid-swaps}) generalize to arbitrary graphs?  Can we guarantee a sorting time in terms of the diameter, say, and some other properties of the graph?  Specifically, given a graph on $n$ vertices, we can construct another graph on the $n!$ permutations of the vertices.  Two permutations are adjacent in the new graph if there exists a set of disjoint edges in the original graph such that we can get from one permutation to the other by swapping the two ends of each of the chosen edges.  Can we estimate, up to a constant factor, the diameter of the new graph?

Finally, we could ask about properties of the configuration spaces other than diameter.  The notion of adjacency used in defining the $L^\infty$ intrinsic metric turns the square and hexagon configuration spaces into graphs.  Instead of diameter, we could look at other graph-theoretic properties such as expansion.  Another approach is to add higher-dimensional cells so as to approximate the disk configuration spaces topologically, and keep track of topological invariants such as Betti numbers; Matthew Kahle and Robert MacPherson (personal communication) have constructed such a cell complex for the square configuration spaces.  Any of these directions for inquiry might give new understanding of how the configuration spaces change shape according to the shape of the board, the shape of the pieces, and the density of the pieces on the board.

\bibliography{fifteen}{}
\bibliographystyle{amsalpha}
\end{document}